\newtheorem{prop}{Proposition}
\pgfplotsset{compat=1.7}
\newcolumntype{L}[1]{>{\raggedright\arraybackslash}p{#1}}
\begin{document}

\begin{frontmatter}

%% Title, authors and addresses

%% use the tnoteref command within \title for footnotes;
%% use the tnotetext command for theassociated footnote;
%% use the fnref command within \author or \address for footnotes;
%% use the fntext command for theassociated footnote;
%% use the corref command within \author for corresponding author footnotes;
%% use the cortext command for theassociated footnote;
%% use the ead command for the email address,
%% and the form \ead[url] for the home page:
%% \title{Title\tnoteref{label1}}
%% \tnotetext[label1]{}
%% \author{Name\corref{cor1}\fnref{label2}}
%% \ead{email address}
%% \ead[url]{home page}
%% \fntext[label2]{}
%% \cortext[cor1]{}
%% \address{Address\fnref{label3}}
%% \fntext[label3]{}

\title{Robust optimization-based heuristic algorithm \\for the chance-constrained knapsack problem using submodularity}

%% use optional labels to link authors explicitly to addresses:
%% \author[label1,label2]{}
%% \address[label1]{}
%% \address[label2]{}

\author[label1]{Seulgi Joung}
\ead{sgjoung@snu.ac.kr}

\author[label1]{Kyungsik Lee\corref{cor1}}
\ead{optima@snu.ac.kr}

\cortext[cor1]{Corresponding author}
\address[label1]{Department of Industrial Engineering, Seoul National University 1, Gwanak-ro, Gwanak-gu, Seoul 08826, Republic of Korea}

\begin{abstract}
%% Text of abstract
In this paper, we propose a robust optimization-based heuristic algorithm for the chance-constrained binary knapsack problem (CKP). We assume that the weights of items are independent normally distributed. By utilizing the properties of the submodular function, the proposed method approximates the CKP to the robust knapsack problem with a cardinality constrained uncertainty set parameterized by a uncertainty budget parameter. The proposed approach obtains a heuristic solution by solving the approximated robust knapsack problem whose optimal solution can be obtained by solving the ordinary binary knapsack problem iteratively. The computational results show the effectiveness and efficiency of the proposed approach.
\end{abstract}

\begin{keyword}
Chance-constrained knapsack problem \sep Heuristic \sep Robust optimization \sep Submodularity
%% keywords here, in the form: keyword \sep keyword

%% PACS codes here, in the form: \PACS code \sep code

%% MSC codes here, in the form: \MSC code \sep code
%% or \MSC[2008] code \sep code (2000 is the default)

\end{keyword}

\end{frontmatter}

%% \linenumbers

%% main text
\section{Introduction}
The binary knapsack problem is a well-known combinatorial optimization problem. There is a set of $n$ items $N=\{1,\dots,n\}$,  each item $j$ having weight ${a}_j$ and profit $p_j$. The objective of the binary knapsack problem is to find a subset of items with the maximum profit sum that satisfies the knapsack capacity $b$. 
The binary knapsack problem is NP-hard, but it can be solved in pseudopolynomial time $O(nb)$ using dynamic programming. Due to its theoretical and practical importance, the binary knapsack problem has been extensively studied over the last few decades (see Martello and Toth \cite{martello1990knapsack} and Kellerer et al. \cite{kellerer2004knapsack}).

When we attempt to solve real-world problems, data uncertainty is inevitable. Two representative approaches to data uncertainty in optimization theory are robust optimization and stochastic optimization. Robust optimization defines an uncertainty set of uncertain data, and stochastic optimization considers the probabilistic characteristics of such data. Chance-constrained programming is a stochastic optimization approach that finds a solution satisfying constraints within a given threshold. In this paper, we consider the binary knapsack problem with uncertain weights of items. To solve this chance-constrained knapsack problem (CKP), the objective of which is to find a subset of items with the maximum profit sum when the probability of satisfying the knapsack constraint is greater than or equal to a given threshold $\rho$, we propose a robust optimization-based heuristic approach. Here we assume that $\rho \geq 0.5$. The CKP can be formulated as follows:
\begin{equation}
\max\left\{ \sum_{j \in N} p_j x_j \mid P \left( \sum_{j \in N}{a}_j x_j \leq b\right) \geq \rho, x \in \mathbb{B}^n \right\}.
\end{equation}
Each binary variable $x_j$ is $1$ if item $j$ is chosen, $0$ otherwise. In general, chance-constrained programming problems are difficult to solve, since the feasible solution set of chance constraints is non-convex in most cases (see Nemirovski and Shapiro \cite{nemirovski2006convex}).    
In this paper, we assume that the weight of each item is independent and normally distributed with mean $\bar{a}_j$ and standard deviation $\sigma_j$. Under this assumption, the chance constraint can be reformulated as a second-order cone constraint; accordingly then, it can be solved using commercial softwares' branch-and-bound methods. However, it remains difficult to derive optimal solutions to large problems. Klopfenstein and Nace \cite{klopfenstein2008robust} proposed a pseudopolynomial-time heuristic algorithm for the CKP only where the bounds on uncertain coefficients are known (e.g. in the case of a uniform distribution). Goyal and Ravi \cite{goyal2010ptas} suggested a polynomial time approximation scheme (PTAS) for the CKP. They reformulated the problem as a parametric LP and provided a rounding algorithm. Han et al. \cite{han2016robust} considered a robust optimization approach for the CKP, specifically an approximation approach that provides an upper bound on the optimal value. 

In the present study, we utilized the properties of the submodular function to solve the CKP. A set function is called a submodular function if it has a diminishing returns property. Submodular functions are used in various discrete optimization problems (see Iwata \cite{iwata2008submodular}). However, only a few studies have employed submodularity for optimization problems with data uncertainty. Atamt{\"u}rk and Narayanan \cite{atamturk2008polymatroids} having considered discrete mean-risk minimization problems, used submodulairy to derive valid inequalities for the binary and mixed-integer problems. Atamt{\"u}rk and Bhardwaj \cite{atamturk2018network} studied a network design problem with uncertain arc capacities, exploiting submodularity and supermodularity to handle an exponential number of probabilistic constraints. 

In this paper, we propose, as an extended version of Klopfenstein and Nace's approach \cite{klopfenstein2008robust}, a heuristic method to find a near-optimal and feasible solution for (\ref{ckp}). Our method can consider a normal distribution by utilizing submodularity to define an interval for the uncertain weights. Using the defined interval, we  then approximate the CKP by a cardinality constrained robust knapsack problem with Bertsimas and Sim model \cite{bertsimas2004price}. The rest of this paper is organized as follows. In Section 2, the robust optimization-based heuristic method is proposed. In Section 3, the results of the proposed approach are reported. In Section 4, our concluding remarks are presented.  
%===========================================================================================================================%
\section{Robust Optimization-Based Heuristic Method}
We assume that each weight $a_j$ of item $j$ has an independent normal distribution with mean $\bar a_j$ and standard deviation $\sigma_j$. 
Then the CKP can be reformulated as
\begin{equation}
\label{ckp}
\begin{aligned}
&\max&& \sum_{j \in N} p_j x_j\\
&\text{ s.t.}& & \sum_{j \in N}\bar{a}_j x_j + \Phi^{-1}(\rho) \sqrt{\sum_{j \in N} \sigma_j^2 x_j^2}  \leq b,\\
&& &x \in \mathbb{B}^n.
\end{aligned}
\end{equation}
In this paper, we assume that $\bar{a}_j \geq 0$, $\sigma_j \geq 0$, and $p_j$ is a non-negative integer for all $j \in N$.
Note that the continuous relaxation of (\ref{ckp}) is a second-order cone programming (SOCP) problem.
The feasible solution set of (\ref{ckp}) is equivalent to the following set, which has a robust constraint with ellipsoidal uncertainty set $\mathcal{U}$:
\begin{displaymath}
\left\{ x \in \mathbb{B}^n \mid \sum_{j \in N}{a}_j x_j \leq b, \forall a \in \mathcal{U} \right\},
\end{displaymath}  
where \begin{displaymath}\mathcal{U} = \left\{\bar{a}_j+ \Phi^{-1}(\rho) \Sigma^{1/2} z \mid \|z\|_2 \leq 1 \right\}, \Sigma = \begin{bmatrix}
       \sigma_1^2 &         &  \\
                  & \ddots  &  \\
                  &         & \sigma_n^2
     \end{bmatrix}.\end{displaymath} 

\noindent The ellipsoidal uncertainty set $\mathcal{U}$ also can be represented as
\begin{equation*}
\label{U}
\mathcal{U} = \left\{\bar{a}_j+ \Sigma^{1/2} \epsilon \mid \sum_{j \in N} \epsilon_j^2 \leq (\Phi^{-1}(\rho))^2 \right\}.
\end{equation*}
We use the properties of the submodular function, which is a set function with a diminishing returns property, to reformulate (\ref{ckp}). Since 
\begin{displaymath}
f(S)=\sum_{j \in S}\bar{a}_j + \Phi^{-1}(\rho) \sqrt{\sum_{j \in S} \sigma_j^2}
\end{displaymath}
for $S \subseteq N$ is a submodular set function (see Atamt{\"u}rk and Narayanan \cite{atamturk2008polymatroids}), the CKP (\ref{ckp}) can be reformulated as
\begin{equation}
\label{reckp}
\begin{aligned}
&\max&& \sum_{j \in N} p_j x_j\\
&\text{ s.t.}& & \sum_{j \in N}\pi_j x_j \leq b, \quad \forall \pi \in ext(\Pi_f),\\
&& &x \in \mathbb{B}^n,
\end{aligned}
\end{equation}
where $\Pi_f = \{\pi \in \mathbb{R}^n \mid \sum_{j \in S}\pi_j \leq f(S), \forall S \subseteq N \}$. Let $ext(\Pi_f)$ be the set of extreme points of $\Pi_f$. The reformulated problem (\ref{reckp}) thus has an exponential number of linear knapsack constraints with the same capacity $b$. The extreme points of $\Pi_f$ can be obtained using the following greedy algorithm (see Edmonds  \cite{edmonds1970submodular}) for all permutations of $N$. For a permutation of $N$ $((1),(2),\dots,(n))$, let $S_j =\{(1),(2),\dots,(j)\}$ and $S_0 = \emptyset$. Then  $\pi_{(j)} = f(S_j) - f(S_{j-1})$ for $j \in N$ is an extreme point of $\Pi_f$.
 
\begin{prop}
$x \in \mathbb{B}^n$ is feasible for (\ref{ckp}) if and only if $x$ is feasible for (\ref{reckp}).
\end{prop}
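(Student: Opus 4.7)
The plan is to translate both feasibility conditions for a binary point $x$ into statements about the set $S := \{j \in N : x_j = 1\}$, and then invoke Edmonds' greedy characterization of the submodular polyhedron to equate them.

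First I would observe that since $x \in \mathbb{B}^n$, we have $x_j^2 = x_j$, so the chance constraint of (\ref{ckp}) reduces to
\begin{equation*}
\sum_{j \in S} \bar a_j + \Phi^{-1}(\rho)\sqrt{\sum_{j \in S}\sigma_j^2} \;=\; f(S) \;\leq\; b.
\end{equation*}
Thus $x$ is feasible for (\ref{ckp}) if and only if $f(S) \leq b$.

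Next, for (\ref{reckp}), the binary vector $x$ is feasible if and only if $\sum_{j \in S}\pi_j \leq b$ for every $\pi \in ext(\Pi_f)$, equivalently $\max_{\pi \in ext(\Pi_f)} \sum_{j \in S}\pi_j \leq b$. Since $\Pi_f$ is a polyhedron (in fact it is pointed and nonempty because $f$ is submodular), maximizing a linear function over $\Pi_f$ attains its optimum at an extreme point, so this maximum equals $\max_{\pi \in \Pi_f} \sum_{j \in S}\pi_j$. The key step is then to show
\begin{equation*}
\max_{\pi \in \Pi_f}\,\sum_{j \in S}\pi_j \;=\; f(S).
\end{equation*}
The inequality $\leq$ is immediate from the defining constraint $\sum_{j \in S}\pi_j \leq f(S)$ of $\Pi_f$. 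For the reverse inequality I would exhibit an extreme point attaining $f(S)$ via the greedy algorithm described just before the proposition: choose any permutation $((1),\ldots,(n))$ of $N$ that lists the elements of $S$ first, and set $\pi_{(j)} = f(S_j) - f(S_{j-1})$. Then $\pi \in ext(\Pi_f)$, and the telescoping sum gives $\sum_{j \in S}\pi_j = f(S) - f(\emptyset) = f(S)$.

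Combining the two characterizations yields that $f(S) \leq b$ is equivalent to $\sum_{j \in S}\pi_j \leq b$ for all $\pi \in ext(\Pi_f)$, which proves the proposition. The only nontrivial ingredient is Edmonds' greedy result, which has already been cited in the paper and applied in defining $ext(\Pi_f)$; no additional computation is needed, so there is no real obstacle beyond being careful that $x_j^2 = x_j$ under binarity and that the greedy ordering can be chosen to place $S$ first.
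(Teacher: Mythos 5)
Your proposal is correct and takes essentially the same route as the paper: the paper's proof also reduces feasibility for (\ref{ckp}) to $f(S)\leq b$ and, for the nontrivial direction, constructs the greedy extreme point from a permutation listing the support of $x$ first (by sorting items in non-increasing order of $x_j^*$) so that the telescoping sum equals $f(S)$. Your reformulation via $\max_{\pi\in ext(\Pi_f)}\sum_{j\in S}\pi_j=f(S)$ is just a slightly more explicit packaging of the same two inequalities.
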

\begin{proof}
If $x$ is feasible for (\ref{ckp}), then it is feasible for (\ref{reckp}), since $\sum_{j \in N}\pi_j x_j \leq b$ is satisfied for all $ \pi \in ext(\Pi_f)$ by the definition of $\Pi_f$. Now, we will show that if $x^* \in \mathbb{B}^n$ is infeasible for (\ref{ckp}), then $\sum_{j \in N} \pi_j x^*_j > b$ for some $\pi \in ext(\Pi_f)$. We assume that items are sorted in non-increasing order of $x^*_j$. Let $\pi^{\prime}_j = f(S_j)-f(S_{j-1})$. Then, $\sum_{j\in N}\pi^{\prime}_j x_j^* = \sum_{j \in N}\bar{a}_j x^*_j + \Phi^{-1}(\rho) \sqrt{\sum_{j \in N} \sigma_j^2 x_j^{*2}}$ by the definition of $\pi^{\prime}$. Since $x^*$ is infeasible for (\ref{ckp}), we can easily see that $\sum_{j\in N}\pi^{\prime}_j x_j^* > b$.
\end{proof}
We define
 \begin{displaymath}
\underline\pi_j =  \bar{a}_j + \Phi^{-1}(\rho)\left\{\sqrt{\sum_{i \in N} \sigma_i^2} -\sqrt{\sum_{i \in N \setminus \{j\}} \sigma_i^2}\right\} \end{displaymath}
 and
\begin{displaymath}
\bar\pi_j = \bar{a}_j + \Phi^{-1}(\rho) \sigma_j
 \end{displaymath}
 for all $j \in N$. Note that $\underline\pi_j$ is the value of $\pi_j$ when item $j$ is the last item of a permutation of $N$, and $\bar\pi_j$ is the value of $\pi_j$ when item $j$ is the first item of a permutation of $N$. The value of $\pi_j$ is included in an interval $[{\underline\pi}_j, {\bar\pi}_j]$ for all $\pi \in ext(\Pi_f)$, and the knapsack capacities of all constraints of (\ref{reckp}) are the same. Therefore, we can use the cardinality-constrained robust knapsack problem of Bertsimas and Sim \cite{bertsimas2004price} to approximate the CKP. We define the following robust knapsack problem RKP$(\Gamma)$ for a non-negative parameter $\Gamma$ using ${\underline\pi}_j$ and ${\bar\pi}_j$:
\begin{equation}
\label{subrkp}
\begin{aligned}
z^* = &\max&& \sum_{j \in N} p_j x_j\\
&\text{ s.t.}& & \sum_{j \in N}\underline\pi_j x_j +\\
&&&  \max_{\substack{S \cup \{t\}\subseteq N, \\ |S|\leq \lfloor\Gamma\rfloor, \\ t \in N \setminus S}}\left\{\sum_{j \in S}(\bar\pi_j-\underline\pi_j) x_j+(\Gamma - \lfloor\Gamma\rfloor)(\bar\pi_t-\underline\pi_t) x_t\right\} \leq b,\\
&& &x \in \mathbb{B}^n.
\end{aligned}
\end{equation}
Additionally,  the feasible solution set of (\ref{subrkp}) is 
\begin{displaymath}
\left\{ x \in \mathbb{B}^n \mid \sum_{j \in N}{a}_j x_j \leq b, \forall a \in \mathcal{U}(\Gamma) \right\},
\end{displaymath}  
where 
\begin{displaymath}
\mathcal{U}(\Gamma) = \left\{\underline{\pi}_j+ \sum_{j \in N}r_j (\bar{\pi}_j - \underline{\pi}_j) \mid \sum_{j \in N}r_j \leq \Gamma, 0 \leq r_j \leq 1, \forall j \in N \right\}.
\end{displaymath}

We can obtain a feasible solution to the CKP by choosing an appropriate value of $\Gamma$. Figure \ref{fig:uncertaintyset} represents the solution set of the CKP with 2 variables.
The ellipsoidal uncertainty set $\mathcal{U}$ of ${a}$ is the ellipse in Figure \ref{fig:uncertaintyset}. In addition, $\mathcal{U}(0) = \{(\underline{\pi}_1, \underline{\pi}_2)\}$, $\mathcal{U}(1)$ is the dashed triangular area, and $\mathcal{U}(2)$ is the gray area. We can see that $\Gamma=1$ is sufficient to guarantee the feasibility of the CKP (\ref{ckp}) of Figure \ref{fig:uncertaintyset}. 
\begin{figure}[h]
\centering
\begin{tikzpicture}
\usetikzlibrary{patterns}
\draw[] (5, 3) ellipse (3.29 and 1.645);
\draw[dashed] (5, 3) -- (8.29, 3);
\draw[dashed] (5, 3) -- (5, 4.645);
\draw[dashed] (8.29, 3) -- (8.29, 4.645);
\draw[dashed] (5, 4.645) -- (8.29, 4.645);
\draw[] (7.0333, 3.3883) -- (8.29, 3.3883);
\draw[] (7.0333, 3.3883) -- (7.0333, 4.645);
\draw[] (8.29, 3.3883) -- (8.29, 4.645);
\draw[] (7.0333, 4.645) -- (8.29, 4.645);
\draw[] (7.0333, 4.645) -- (8.29, 3.3883);
\draw [draw=black, fill=gray, fill opacity=0.2] (7.0333,3.3883) -- (8.29,3.3883) -- (8.29,4.645) -- (7.0333,4.645) -- cycle;
\pattern[pattern=north east lines] (7.0333,3.3883) -- (8.29,3.3883) -- (7.0333,4.645)--cycle;
\draw (5,3) node[left] {\small$(\bar{a}_1, \bar{a}_2)$};
\draw[] (8.290,4.645) node[right] {\small$(\bar{\pi}_1, \bar{\pi}_2)$};
\draw[] (7.0333,3.3883) node[left] {\small$(\underline{\pi}_1, \underline{\pi}_2)$};
\draw [color=black, fill=black] (5,3) circle (.03);
\draw [color=black, fill=black] (8.290,4.645) circle (.03);
\draw [color=black, fill=black] (7.0333,3.3883) circle (.03);
\end{tikzpicture}
\caption{Uncertainty set of ${a}$}
\label{fig:uncertaintyset}
\end{figure}
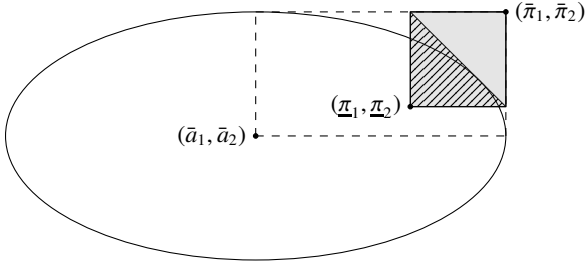

We increase the $\Gamma$ value from $0$ until the optimal solution to RKP$(\Gamma)$ is feasible for (\ref{ckp}). Since an optimal solution to RKP$(n)$ is feasible for (\ref{reckp}), we can find a feasible solution for it using the proposed approach.

\begin{prop}
\label{prop2}
An optimal solution to RKP$(n)$ is feasible for (\ref{reckp}).
\end{prop}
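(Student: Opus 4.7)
The plan is to reduce the constraint of RKP$(n)$ to the single linear inequality $\sum_{j \in N} \bar\pi_j x_j \le b$ and then invoke the submodularity of $f$ to show that every extreme point of $\Pi_f$ is coordinate-wise dominated by $\bar\pi$. These two facts combine immediately to give the conclusion.

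For the first step I would examine (\ref{subrkp}) at $\Gamma = n$. The fractional part $n - \lfloor n \rfloor$ vanishes, so the inner maximum reduces to $\max\bigl\{\sum_{j \in S}(\bar\pi_j - \underline\pi_j)x_j : S \subseteq N,\ |S| \le n\bigr\}$. Because $x_j \in \{0,1\}$ and $\bar\pi_j - \underline\pi_j \ge 0$, this maximum is attained by taking $S = \{j : x_j = 1\}$, with value $\sum_{j \in N}(\bar\pi_j - \underline\pi_j)x_j$. Thus the RKP$(n)$ constraint simplifies to $\sum_{j \in N}\bar\pi_j x_j \le b$; one can also see this from the uncertainty-set form, since $\bar\pi \in \mathcal{U}(n)$ by choosing $r_j = 1$ for all $j \in N$.

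For the second step I would recall the greedy characterization of $ext(\Pi_f)$ recalled just before Proposition 1: for a permutation whose $k$-th entry is $j$, the corresponding extreme point satisfies $\pi_j = f(S_k) - f(S_{k-1})$ with $j \notin S_{k-1}$ and $\emptyset \subseteq S_{k-1}$. The submodularity of $f$ then yields the marginal-contribution inequality $f(S_k) - f(S_{k-1}) \le f(\{j\}) - f(\emptyset) = \bar a_j + \Phi^{-1}(\rho)\sigma_j = \bar\pi_j$. Hence $\pi_j \le \bar\pi_j$ for every $j \in N$ and every $\pi \in ext(\Pi_f)$. Combining the two steps, any $x \in \mathbb{B}^n$ feasible (and hence optimal) for RKP$(n)$ satisfies $\sum_{j \in N} \pi_j x_j \le \sum_{j \in N}\bar\pi_j x_j \le b$ for every $\pi \in ext(\Pi_f)$, which is exactly feasibility for (\ref{reckp}).

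The only subtlety is bookkeeping in the inner maximum of (\ref{subrkp}) when the literal reading enforces $t \in N \setminus S$ and hence $|S| \le n-1$; this matters only when $x$ is the all-ones vector. Even there, placing the index $t^*$ minimizing $\bar\pi_{t^*} - \underline\pi_{t^*}$ at the end of the greedy permutation, together with the submodular bound on every other coordinate, yields $f(N) \le \sum_{j \in N}\bar\pi_j - (\bar\pi_{t^*} - \underline\pi_{t^*})$, which is exactly the left-hand side of the RKP$(n)$ constraint at $x = \mathbf{1}$. The submodular inequality $\pi_j \le \bar\pi_j$ is thus the only non-routine ingredient of the argument.
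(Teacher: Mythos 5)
Your proof is correct and rests on the same key fact as the paper's one-line argument, namely that $\pi_j \le \bar\pi_j$ for every $j \in N$ and every $\pi \in \Pi_f$, so that feasibility for RKP$(n)$ (whose constraint reduces to $\sum_{j\in N}\bar\pi_j x_j \le b$) implies feasibility for every constraint of (\ref{reckp}). You go further than the paper by actually deriving $\pi_j \le \bar\pi_j$ from submodularity and by handling the $x=\mathbf{1}$ boundary case forced by the condition $t \in N\setminus S$, both of which the paper's proof leaves implicit; these additions are sound.
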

\begin{proof}
Since $\pi_j \leq \bar\pi_j$ for all $j \in N$ and $\pi \in \Pi_f$, a feasible solution for RKP$(n)$ is also feasible for (\ref{reckp}).
\end{proof}
 Each robust knapsack problem (\ref{subrkp}) can be solved by solving $n+1$ ordinary knapsack problems as follows (see Bertsimas and Sim \cite{bertsimas2003robust}). Assume that items are ordered in non-increasing order of $(\bar\pi_j-\underline\pi_j)$. We also define $(\bar \pi_{n+1} - \underline \pi_{n+1}) = 0$. Then, (\ref{subrkp}) can be solved by solving $n+1$ ordinary knapsack problems $g^l$ for $l = 1,\dots, n+1$:
\begin{displaymath}
z^* = \min_{l = 1, \dots, n+1} g^l,
\end{displaymath}  
where 

\begin{equation}
\label{sub_rkp_kp}
\begin{aligned}
g^l = &\max&& \sum_{j \in N} p_j x_j\\
&\text{ s.t.}& & \sum_{j \in N}\underline\pi_j x_j + \sum_{j=1}^{l}\left\{(\bar\pi_j-\underline\pi_j) - (\bar\pi_l-\underline\pi_l)\right\}x_j \\ 
&&&\leq b - \Gamma (\bar\pi_l-\underline\pi_l),\\
&& &x \in \mathbb{B}^n.
\end{aligned}
\end{equation}

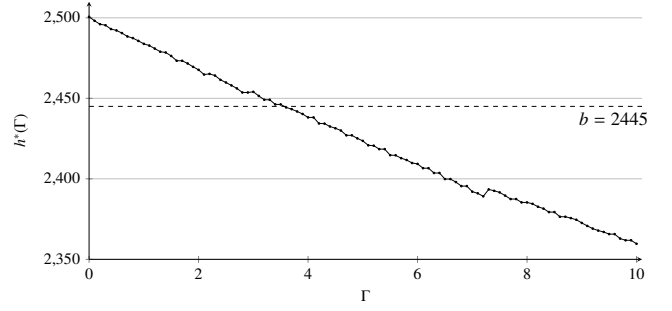
\begin{figure}[h]
	\centering
		\begin{tikzpicture}[scale = 0.6]
\begin{axis}[
  ymin=2350,
  ymax=2510,
  xmin=0,
  xmax=10.1,
  x label style={at={(axis description cs:0.5,-0.1)},anchor=north},
  y label style={at={(axis description cs:-0.1,0.5)},anchor=south},
  ytick={2350, 2400, 2450, 2500},
  xtick={0,2,4,6,8,10},
  xlabel=$\Gamma$,
  ylabel=$h^*(\Gamma)$,
  x=1.2cm,
  axis lines=left,
  ymajorgrids,
  ]
\addplot[color=black,mark=*, mark size = 0.5pt] coordinates {
		 (	0	,	2500.64	)
(	0.1	,	2498.16	)
(	0.2	,	2496.02	)
(	0.3	,	2495.4	)
(	0.4	,	2493.03	)
(	0.5	,	2492.17	)
(	0.6	,	2490.53	)
(	0.7	,	2488.43	)
(	0.8	,	2487.4	)
(	0.9	,	2485.68	)
(	1	,	2483.85	)
(	1.1	,	2482.78	)
(	1.2	,	2480.91	)
(	1.3	,	2478.95	)
(	1.4	,	2478.48	)
(	1.5	,	2476.32	)
(	1.6	,	2473.34	)
(	1.7	,	2473.34	)
(	1.8	,	2471.65	)
(	1.9	,	2469.57	)
(	2	,	2467.7	)
(	2.1	,	2464.78	)
(	2.2	,	2465.18	)
(	2.3	,	2464.15	)
(	2.4	,	2461.49	)
(	2.5	,	2459.84	)
(	2.6	,	2458.03	)
(	2.7	,	2456.16	)
(	2.8	,	2453.63	)
(	2.9	,	2453.63	)
(	3	,	2454.05	)
(	3.1	,	2451.49	)
(	3.2	,	2449.12	)
(	3.3	,	2449.12	)
(	3.4	,	2446.22	)
(	3.5	,	2446.22	)
(	3.6	,	2444.35	)
(	3.7	,	2443.27	)
(	3.8	,	2441.84	)
(	3.9	,	2440.29	)
(	4	,	2438.14	)
(	4.1	,	2438.14	)
(	4.2	,	2434.38	)
(	4.3	,	2434.3	)
(	4.4	,	2432.51	)
(	4.5	,	2431.4	)
(	4.6	,	2430	)
(	4.7	,	2427.02	)
(	4.8	,	2427.02	)
(	4.9	,	2425.13	)
(	5	,	2423.58	)
(	5.1	,	2420.82	)
(	5.2	,	2420.6	)
(	5.3	,	2418.45	)
(	5.4	,	2418.45	)
(	5.5	,	2414.69	)
(	5.6	,	2414.61	)
(	5.7	,	2412.83	)
(	5.8	,	2411.71	)
(	5.9	,	2409.84	)
(	6	,	2409.28	)
(	6.1	,	2406.61	)
(	6.2	,	2406.61	)
(	6.3	,	2403.63	)
(	6.4	,	2403.63	)
(	6.5	,	2399.87	)
(	6.6	,	2399.87	)
(	6.7	,	2398	)
(	6.8	,	2395.49	)
(	6.9	,	2395.49	)
(	7	,	2392.03	)
(	7.1	,	2391	)
(	7.2	,	2389.13	)
(	7.3	,	2393.43	)
(	7.4	,	2392.57	)
(	7.5	,	2391.57	)
(	7.6	,	2389.59	)
(	7.7	,	2387.44	)
(	7.8	,	2387.44	)
(	7.9	,	2385.35	)
(	8	,	2385.35	)
(	8.1	,	2384.49	)
(	8.2	,	2382.68	)
(	8.3	,	2381.51	)
(	8.4	,	2379.35	)
(	8.5	,	2379.35	)
(	8.6	,	2376.46	)
(	8.7	,	2376.46	)
(	8.8	,	2375.6	)
(	8.9	,	2374.6	)
(	9	,	2372.62	)
(	9.1	,	2370.76	)
(	9.2	,	2369.1	)
(	9.3	,	2367.86	)
(	9.4	,	2367	)
(	9.5	,	2365.67	)
(	9.6	,	2365.67	)
(	9.7	,	2362.91	)
(	9.8	,	2361.83	)
(	9.9	,	2361.83	)
(	10	,	2359.67	)
};
\addplot[color=black, dashed] coordinates {		(	0	,	2445	)		(	10.1	,	2445	)	};
\end{axis}
\node at (11.5, 3.1) {\scriptsize{$b=2445$}};
\end{tikzpicture}
\caption{Change of $h^*$ values for different $\Gamma$ values}
\label{fig2}
\end{figure}

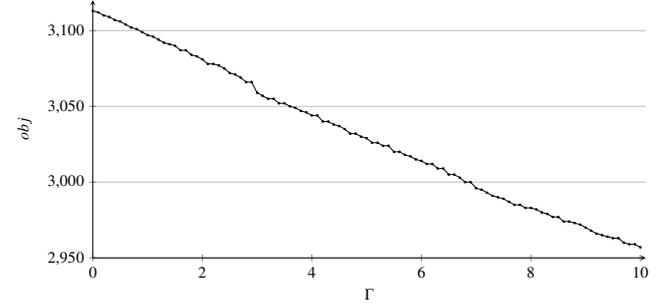
\begin{figure}[t]
	\centering
		\begin{tikzpicture}[scale = 0.6]
\begin{axis}[
  ymin=2950,
  ymax=3120,
  xmin=0,
  xmax=10.1,
  x label style={at={(axis description cs:0.5,-0.1)},anchor=north},
  y label style={at={(axis description cs:-0.1,0.5)},anchor=south},
  ytick={2950, 3000, 3050, 3100},
  xtick={0,2,4,6,8,10},
  xlabel=$\Gamma$,
  ylabel=$obj$,
  x=1.2cm,
  axis lines=left,
  ymajorgrids,
  ]
\addplot[color=black,mark=*, mark size = 0.5pt] coordinates {
(	0	,	3113	)
(	0.1	,	3112	)
(	0.2	,	3110	)
(	0.3	,	3109	)
(	0.4	,	3107	)
(	0.5	,	3106	)
(	0.6	,	3104	)
(	0.7	,	3102	)
(	0.8	,	3101	)
(	0.9	,	3099	)
(	1	,	3097	)
(	1.1	,	3096	)
(	1.2	,	3094	)
(	1.3	,	3092	)
(	1.4	,	3091	)
(	1.5	,	3090	)
(	1.6	,	3087	)
(	1.7	,	3087	)
(	1.8	,	3084	)
(	1.9	,	3083	)
(	2	,	3081	)
(	2.1	,	3078	)
(	2.2	,	3078	)
(	2.3	,	3077	)
(	2.4	,	3075	)
(	2.5	,	3072	)
(	2.6	,	3071	)
(	2.7	,	3069	)
(	2.8	,	3066	)
(	2.9	,	3066	)
(	3	,	3059	)
(	3.1	,	3057	)
(	3.2	,	3055	)
(	3.3	,	3055	)
(	3.4	,	3052	)
(	3.5	,	3052	)
(	3.6	,	3050	)
(	3.7	,	3049	)
(	3.8	,	3047	)
(	3.9	,	3046	)
(	4	,	3044	)
(	4.1	,	3044	)
(	4.2	,	3040	)
(	4.3	,	3040	)
(	4.4	,	3038	)
(	4.5	,	3037	)
(	4.6	,	3035	)
(	4.7	,	3032	)
(	4.8	,	3032	)
(	4.9	,	3030	)
(	5	,	3029	)
(	5.1	,	3026	)
(	5.2	,	3026	)
(	5.3	,	3024	)
(	5.4	,	3024	)
(	5.5	,	3020	)
(	5.6	,	3020	)
(	5.7	,	3018	)
(	5.8	,	3017	)
(	5.9	,	3015	)
(	6	,	3014	)
(	6.1	,	3012	)
(	6.2	,	3012	)
(	6.3	,	3009	)
(	6.4	,	3009	)
(	6.5	,	3005	)
(	6.6	,	3005	)
(	6.7	,	3003	)
(	6.8	,	3000	)
(	6.9	,	3000	)
(	7	,	2996	)
(	7.1	,	2995	)
(	7.2	,	2993	)
(	7.3	,	2991	)
(	7.4	,	2990	)
(	7.5	,	2989	)
(	7.6	,	2987	)
(	7.7	,	2985	)
(	7.8	,	2985	)
(	7.9	,	2983	)
(	8	,	2983	)
(	8.1	,	2982	)
(	8.2	,	2980	)
(	8.3	,	2979	)
(	8.4	,	2977	)
(	8.5	,	2977	)
(	8.6	,	2974	)
(	8.7	,	2974	)
(	8.8	,	2973	)
(	8.9	,	2972	)
(	9	,	2970	)
(	9.1	,	2968	)
(	9.2	,	2966	)
(	9.3	,	2965	)
(	9.4	,	2964	)
(	9.5	,	2963	)
(	9.6	,	2963	)
(	9.7	,	2960	)
(	9.8	,	2959	)
(	9.9	,	2959	)
(	10	,	2957	)

};\end{axis}
\end{tikzpicture}
\caption{Change of optimal values of RKP$(\Gamma)$ for different $\Gamma$ values}
\label{fig3}
\end{figure}

Note that when we decompose a robust knapsack problem into $n+1$ ordinary knapsack problems, the integrality of $\Gamma$ is not a  necessary condition. Klopfenstein and Nace \cite{klopfenstein2008robust} considered only integer $\Gamma$ values, but we also consider non-integer $\Gamma$ values. The ordinary knapsack problem is NP-hard, but (\ref{sub_rkp_kp}) can be solved in $O(nU)$, where $U$ is an upper bound on the optimal objective value, using dynamic programming when the profit values are integers (see Kellerer et al. \cite{kellerer2004knapsack}).

In the proposed heuristic method, choosing an appropriate value of $\Gamma$ is essential. We define 
\begin{displaymath}
h^*(\Gamma) = \sum_{j \in N}\bar{a}_j x^*_j + \Phi^{-1}(\rho) \sqrt{\sum_{j \in N} \sigma_j^2 x_j^{*2}},
\end{displaymath} 
where $x^*$ is an optimal solution to RKP$(\Gamma)$. We compare the $h^*(\Gamma)$ value and the optimal value of RKP$(\Gamma)$ for different $\Gamma$ values. 
Figure \ref{fig2} shows the change of $h^*(\Gamma)$ for an SC-type CKP instance (see Han et al. \cite{han2016robust}) when $n=100$. As can be seen, $h^*(\Gamma)$ tends to decrease as $\Gamma$ increases; however, $h^*(\Gamma)$ does not decrease monotonously. If $h^*(\Gamma) \leq b$, the optimal solution $x^*$ is feasible for CKP (\ref{ckp}). Therefore, we cannot guarantee that there exists a unique value of $\Gamma^*$ such that the optimal solution to RKP$(\Gamma)$ is feasible for (\ref{ckp}) if and only if $\Gamma \geq \Gamma^*$. Additionally, Figure \ref{fig3} represents the change of the optimal values of RKP$(\Gamma)$ for the same instance. Naturally, the optimal value of RKP$(\Gamma)$ decreases as $\Gamma$ increases. Therefore, in order to find a small value of $\Gamma$ such that the optimal solution to RKP$(\Gamma)$ is feasible for (\ref{ckp}), we use the following jump search method. Figure \ref{fig4} shows the idea of the jump search. 
\begin{figure}[ht]
\centering
\tikzset{every picture/.style={line width=0.75pt}} %set default line width to 0.75pt        

\begin{tikzpicture}[x=0.75pt,y=0.75pt,yscale=-1,xscale=1,scale=0.9]
%uncomment if require: \path (0,300); %set diagram left start at 0, and has height of 300

\draw    (105.9,125.05) .. controls (132.04,75.1) and (167.92,73.38) .. (195.81,119.92) ;
\draw [shift={(196.65,121.35)}, rotate = 239.74] [fill={rgb, 255:red, 0; green, 0; blue, 0 }  ][line width=0.75]  [draw opacity=0] (10.72,-5.15) -- (0,0) -- (10.72,5.15) -- (7.12,0) -- cycle    ;

\draw [line width=1.5]    (87,125) -- (391.3,125) ;
\draw [shift={(398.3,125)}, rotate = 539.9300000000001] [color={rgb, 255:red, 0; green, 0; blue, 0 }  ][line width=1.5]    (14.21,-4.28) .. controls (9.04,-1.82) and (4.3,-0.39) .. (0,0) .. controls (4.3,0.39) and (9.04,1.82) .. (14.21,4.28)   ;

\draw  [dash pattern={on 0.84pt off 2.51pt}]  (283.3,124.6) .. controls (293.28,104.97) and (306.54,105.29) .. (312.89,118.62) ;
\draw [shift={(313.65,120.35)}, rotate = 248.2] [fill={rgb, 255:red, 0; green, 0; blue, 0 }  ][line width=0.75]  [draw opacity=0] (10.72,-5.15) -- (0,0) -- (10.72,5.15) -- (7.12,0) -- cycle    ;

\draw    (195.3,124.6) .. controls (221.44,74.65) and (257.91,72.39) .. (285.81,118.92) ;
\draw [shift={(286.65,120.35)}, rotate = 239.74] [fill={rgb, 255:red, 0; green, 0; blue, 0 }  ][line width=0.75]  [draw opacity=0] (10.72,-5.15) -- (0,0) -- (10.72,5.15) -- (7.12,0) -- cycle    ;

\draw    (283.3,124.6) .. controls (309.44,74.65) and (342.97,72.39) .. (370.81,118.92) ;
\draw [shift={(371.65,120.35)}, rotate = 239.74] [fill={rgb, 255:red, 0; green, 0; blue, 0 }  ][line width=0.75]  [draw opacity=0] (10.72,-5.15) -- (0,0) -- (10.72,5.15) -- (7.12,0) -- cycle    ;

\draw    (372.7,124.15) .. controls (355.39,149.47) and (325.55,186.73) .. (287.23,128.53) ;
\draw [shift={(286.65,127.65)}, rotate = 417] [fill={rgb, 255:red, 0; green, 0; blue, 0 }  ][line width=0.75]  [draw opacity=0] (10.72,-5.15) -- (0,0) -- (10.72,5.15) -- (7.12,0) -- cycle    ;

\draw  [dash pattern={on 0.84pt off 2.51pt}]  (314.3,124.6) .. controls (324.28,104.97) and (337.54,105.29) .. (343.89,118.62) ;
\draw [shift={(344.65,120.35)}, rotate = 248.2] [fill={rgb, 255:red, 0; green, 0; blue, 0 }  ][line width=0.75]  [draw opacity=0] (10.72,-5.15) -- (0,0) -- (10.72,5.15) -- (7.12,0) -- cycle    ;

\draw  [fill={rgb, 255:red, 255; green, 255; blue, 255 }  ,fill opacity=1 ]  (371.65, 125) circle [x radius= 3.65, y radius= 3.65]  ;
\draw  [fill={rgb, 255:red, 155; green, 155; blue, 155 }  ,fill opacity=1 ]  (286.65, 125) circle [x radius= 3.65, y radius= 3.65]  ;
\draw  [fill={rgb, 255:red, 155; green, 155; blue, 155 }  ,fill opacity=1 ]  (313.65, 125) circle [x radius= 3.65, y radius= 3.65]  ;
\draw  [fill={rgb, 255:red, 255; green, 255; blue, 255 }  ,fill opacity=1 ]  (344.65, 125) circle [x radius= 3.65, y radius= 3.65]  ;
\draw  [fill={rgb, 255:red, 155; green, 155; blue, 155 }  ,fill opacity=1 ]  (196.65, 125) circle [x radius= 3.65, y radius= 3.65]  ;
\draw  [fill={rgb, 255:red, 155; green, 155; blue, 155 }  ,fill opacity=1 ]  (107.65, 125) circle [x radius= 3.65, y radius= 3.65]  ;
\draw (152,77) node [scale=0.7] [align=left] {step 1 ($+m_1$)};
\draw (244,77) node [scale=0.7] [align=left] {step 2 ($+m_1$)};
\draw (329,77) node [scale=0.7] [align=left] {step 3 ($+m_1$)};
\draw (331,170) node [scale=0.7] [align=left] {step 4 ($-m_1$)};
\draw (297,98) node [scale=0.7] [align=left] {step 5\\($+m_2$)};
\draw (333,98) node [scale=0.7] [align=left] {step 6\\($+m_2$)};
\draw (415,125) node [scale=0.7]  {$\Gamma$};
\draw (108,138) node [scale=0.7] [align=left] {0};
\end{tikzpicture}
\caption{Jump search algorithm}
\label{fig4}
\end{figure}
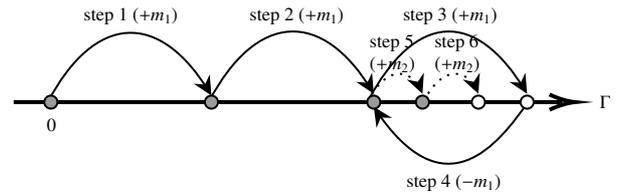
Here the gray points mean that the optimal solution to RKP$(\Gamma)$ is infeasible for (\ref{ckp}), while the white points mean that the optimal solution to RKP$(\Gamma)$ is feasible for (\ref{ckp}). Let $m_k$ be the $k$th step size for $k = 1 \dots K$, where $K$ is a predefined number of iterations. We define $m_1 = n \times (1/u)$ and $m_{k+1} = m_k \times (1/v), k = 1, \dots , K-1$ for positive integers $u$ and $v$. It guarantees the feasibility of the obtained solution by the Proposition \ref{prop2}. We find the smallest nonnegative integer $d_1$ such that $h^*(m_1 d_1) \leq b$. Here, we solve RKP$(m_1 d)$ for $d= 0, 1, 2, \dots$ until $h^*(m_1 d) \leq b$. In Figure \ref{fig4}, we can see that $d_1 = 3$. Here, the $\Gamma$ value is updated to $m_1 d_1$. Then, we start again from $m(d_1 -1)$. Let $m_2$ be the second step size that is smaller than $m_1$. We find the smallest non-negative integer $d_2$ value such that $h^*(m_1(d_1-1) + m_2 d_2) \leq b$. We repeat this procedure $K$ times to find an appropriate $\Gamma$ value. In our test, we use $u = n$ and $v = 10$.  The detailed jump search algorithm is given as Algorithm 1. 
\begin{algorithm}[H]
\caption{Jump search algorithm to find $\Gamma$ value}
\label{alg1}
\begin{algorithmic}[1]
\State Input: $K, u, v$
\State $\gamma = 0$, $k=1$, $m_1 = n \times (1/u)$
\While{$\gamma \leq n$}  
\State Solve RKP($\gamma$)
\If{the optimal solution of RKP($\gamma$) is feasible to (\ref{subrkp})}
\If{$k == K$} 
\State $\Gamma = \gamma$
\State Stop
\Else
\State $\gamma = \gamma - m_k$
\State $m_{k+1} = m_{k}\times (1/v)$
\State $k = k + 1$
\EndIf
\EndIf
\State $\gamma = \gamma + m_k$
\EndWhile
\State Output: $\Gamma$, the optimal value of RKP($\Gamma$)
\end{algorithmic}
\end{algorithm}
%===========================================================================================================================%

\section{Computational Results}
In this section, we report the computational results of the proposed heuristic method. We compared the results with the robust optimization-based approximation approach of Han et al. \cite{han2016robust} and a commercial software (CPLEX 12.7). All of the tests were performed on a computer Intel(R) Core(TM) i7-4770S processor with 3.10 GHz and 16GB RAM. The proposed method and the approximation approach of Han et al. \cite{han2016robust} were implemented using C++. 

We used the same instances as in Han et al. \cite{han2016robust}, as derived from the robust knapsack problem instances of Monaci et al. \cite{monaci2013exact}. The instance types used here are as follows:
\begin{itemize}
\item SC (strongly correlated) instances: $\bar a_j$ is an integer randomly generated in [1,100] and $p_j = \bar a_j + 10$.
\item IC (inverse strongly correlated) instances: $p_j$ is a randomly generated integer in [1,100] and $\bar a_j = \min \{100, p_j + 10\}$. 
\item SS (subset sum) instances: $\bar a_j $ is randomly chosen in [1,100] and $p_j = \bar a_j$.
\end{itemize}
The standard deviation value $\sigma_j$ was randomly generated in $[0.1 \bar a_j, 0.2 \bar a_j]$, and the knapsack capacity $b = \lfloor \sum_{j \in N} \bar a_j\rfloor$. 
We solved each ordinary knapsack problem (\ref{sub_rkp_kp}) using the minimal algorithm coded in C by Pisinger \cite{pisinger2000minimal}. We used the same approach of Han et al. \cite{han2016robust} to transform the non-integral data to integer data. Each weight value was rounded up after multiplying by $10^6$, and the knapsack capacity was rounded down after multiplying by $10^6$.

\begin{table}[h]
\small
\centering
\begin{tabular}{L{0.75cm}L{0.5cm}L{0.9cm}L{0.9cm}L{0.9cm}L{0.9cm}L{0.9cm}}
\toprule
\multicolumn{2}{l}{Instances} & \multicolumn{5}{l}{$K$} \\
\cmidrule(l){1-2} \cmidrule(l){3-7}
\multicolumn{1}{l}{$\rho$} & \multicolumn{1}{l}{type} & \multicolumn{1}{l}{1} & \multicolumn{1}{l}{2} & \multicolumn{1}{l}{3} & \multicolumn{1}{l}{4} & \multicolumn{1}{l}{5} \\\midrule
\multirow{3}{*}{0.85} & SC & 3131.7 & 3135.5 & 3135.7 & 3135.7 & 3135.7 \\
 & IC & 2637.8 & 2640.8 & 2641.0 & 2641.0 & 2641.0 \\
 & SS & 2484.0 & 2486.8 & 2486.8 & 2486.9 & 2486.9 \\\midrule
\multirow{3}{*}{0.90} & SC & 3117.6 & 3122.4 & 3122.9 & 3122.9 & 3122.9 \\
 & IC & 2621.3 & 2625.2 & 2625.6 & 2625.6 & 2625.6 \\
 & SS & 2472.0 & 2475.0 & 2475.6 & 2475.6 & 2475.6 \\\midrule
\multirow{3}{*}{0.95} & SC & 3096.2 & 3102.9 & 3103.3 & 3103.3 & 3103.3 \\
 & IC & 2595.5 & 2600.1 & 2601.1 & 2601.1 & 2601.1 \\
 & SS & 2454.3 & 2458.6 & 2458.8 & 2458.8 & 2458.8 \\\bottomrule
\end{tabular}
\caption{Change of objective values for different $K$ values when $n=100$}
\label{table1}
\end{table}

\begin{table}[h]
\small
\centering
\begin{tabular}{L{0.75cm}L{0.5cm}L{0.9cm}L{0.9cm}L{0.9cm}L{0.9cm}L{0.9cm}}
\toprule
\multicolumn{2}{l}{Instances} & \multicolumn{5}{l}{$K$} \\
\cmidrule(l){1-2} \cmidrule(l){3-7}
\multicolumn{1}{l}{$\rho$} & \multicolumn{1}{l}{type} & \multicolumn{1}{l}{1} & \multicolumn{1}{l}{2} & \multicolumn{1}{l}{3} & \multicolumn{1}{l}{4} & \multicolumn{1}{l}{5} \\\midrule
\multirow{3}{*}{0.85} & SC & 515 & 1141 & 1727 & 2333 & 2899 \\
 & IC & 414 & 1172 & 1757 & 2424 & 2899 \\
 & SS & 515 & 1202 & 1646 & 2313 & 2868 \\\midrule
\multirow{3}{*}{0.90} & SC & 515 & 1141 & 1535 & 2091 & 2535 \\
 & IC & 414 & 1192 & 1828 & 2374 & 2969 \\
 & SS & 515 & 1212 & 1707 & 2343 & 2909 \\\midrule
\multirow{3}{*}{0.95} & SC & 515 & 1111 & 1596 & 2091 & 2697 \\
 & IC & 414 & 1222 & 1788 & 2404 & 2929 \\
 & SS & 515 & 1202 & 1889 & 2353 & 2838\\\bottomrule
\end{tabular}
\caption{Change of numbers of solved knapsack problems for different $K$ values when $n=100$}
\label{table2}
\end{table}

\begin{table*}[ht]
\small
\centering
\begin{tabular}{ccccccccccccc}
\toprule
\multirow{2}{*}{$n$} & \multirow{2}{*}{$\rho$} & \multirow{2}{*}{type} & \multicolumn{4}{c}{\begin{tabular}{@{}c@{}}Proposed heuristic algorithm\\ ($K=3$)\end{tabular}} & \multicolumn{3}{c}{\begin{tabular}{@{}c@{}}Approximation approach \\of Han et al. \cite{han2016robust}\end{tabular}} & \multicolumn{3}{c}{CPLEX} \\
\cmidrule(l){4-7}\cmidrule(l){8-10}\cmidrule(l){11-13}
 &  &  & obj & time & $\Gamma$ & \# knapsack & obj & time & \# knapsack & obj & ub & time \\\midrule
\multirow{9}{*}{100} & \multirow{3}{*}{0.85} & SC & 3135.7 & 0.2 & 3.7 & 1727 & 3136.0 & 5.9 & \multirow{3}{*}{39601} & 3135.8 &  3135.8 & 38.5 \\
 &  & IC & 2641.0 & 0.1 & 2.8 & 1757 & 2641.0 & 4.6 &  & 2641.0 & 2641.0 & 47.4 \\
 &  & SS & 2486.8 & 0.1 & 3.7 & 1646 & 2487.3 & 4.2 &  & 2487.2$^*$ & 2491.0 & 600.0(10) \\\cmidrule(l){2-13}
 & \multirow{3}{*}{0.90} & SC & 3122.9 & 0.1 & 3.7 & 1535 & 3123.2 & 5.5 & \multirow{3}{*}{39601} & 3122.9  & 3122.9 & 26.2 \\
 &  & IC & 2625.6 & 0.1 & 2.8 & 1828 & 2625.9 & 4.4 &  & 2625.8 & 2625.8 & 3.9 \\
 &  & SS & 2475.6 & 0.1 & 3.7 & 1707 & 2476.3 & 4.4 &  & 2475.9$^*$ & 2481.8 & 600.0(10) \\\cmidrule(l){2-13}
 & \multirow{3}{*}{0.95} & SC & 3103.3 & 0.2 & 3.6 & 1596 & 3103.7 & 5.3 & \multirow{3}{*}{39601} & 3103.6 & 3103.6 & 5.4 \\
 &  & IC & 2601.1 & 0.1 & 2.9 & 1788 & 2601.3 & 4.4 &  & 2601.2 & 2601.2 & 20.0 \\
 &  & SS & 2458.8 & 0.1 & 3.7 & 1889 & 2459.7 & 4.3 &  & 2459.4$^*$ & 2465.1 & 600.0(10) \\\midrule
\multirow{9}{*}{500} & \multirow{3}{*}{0.85} & SC & 15943.5 & 3.4 & 7.2 & 9719 &  \multicolumn{2}{c}{-}  & \multirow{3}{*}{998001} & 15943.4$^*$ & 15947.2 & 489.7(8) \\
 &  & IC & 13450.6 & 1.5 & 5.7 & 10321 &  \multicolumn{2}{c}{-}  &  & 13450.2$^*$ & 13453.7 & 487.4(8) \\
 &  & SS & 12457.4 & 0.9 & 8.1 & 9770 &  \multicolumn{2}{c}{-}  &  & 12458.7$^*$ & 12479.8 & 600.0(10) \\\cmidrule(l){2-13}
 & \multirow{3}{*}{0.90} & SC & 15911.2 & 3.1 & 7.3 & 9669 &  \multicolumn{2}{c}{-}  & \multirow{3}{*}{998001} & 15913.2$^*$ & 15915.7 & 504.5(8) \\
 &  & IC & 13413.9 & 1.3 & 5.7 & 10070 &  \multicolumn{2}{c}{-}  &  & 13413.7$^*$ & 13416.3 & 337.7(5) \\
 &  & SS & 12431.4 & 0.8 & 8.1 & 9369 &  \multicolumn{2}{c}{-}  &  & 12433.1$^*$ & 12459.2 & 600.0(10) \\\cmidrule(l){2-13}
 & \multirow{3}{*}{0.95} & SC & 15863.8 & 2.7 & 7.3 & 9269 &  \multicolumn{2}{c}{-}  & \multirow{3}{*}{998001} & 15865.0$^*$ & 15869.3 & 600.0(10) \\
 &  & IC & 13358.8 & 1.2 & 5.8 & 10271 &  \multicolumn{2}{c}{-}  &  & 13358.7$^*$ & 13361.2 & 406.3(6) \\
 &  & SS & 12392.8 & 0.8 & 8.1 & 8768 &  \multicolumn{2}{c}{-}  &  & 12395.1$^*$ & 12427.5 & 600.0(10) \\\midrule
\multirow{9}{*}{1000} & \multirow{3}{*}{0.85} & SC & 32115.8 & 13.9 & 9.9 & 21321 &  \multicolumn{2}{c}{-}  & \multirow{3}{*}{3996001} & 32116.0$^*$ & 32121.1 & 429.7(7) \\
 &  & IC & 27154.2 & 4.4 & 8.0 & 17918 &  \multicolumn{2}{c}{-}  & & 27153.1$^*$ & 27157.5 & 305.3(5) \\
 &  & SS & 25155.6 & 3.3 & 11.3 & 21622 &  \multicolumn{2}{c}{-}  &  & 25157.0$^*$ & 25188.9 & 600.0(10) \\\cmidrule(l){2-13}
 & \multirow{3}{*}{0.90} & SC & 32070.6 & 13.2 & 9.9 & 23423 &  \multicolumn{2}{c}{-}  & \multirow{3}{*}{3996001} & 32072.4$^*$ &  32076.5& 312.3(4) \\
 &  & IC & 27099.5 & 4.7 & 8.0 & 19520 &  \multicolumn{2}{c}{-}  &  & 27098.3$^*$ & 27103.4 & 491.8(8) \\
 &  & SS & 25118.2 & 3.4 & 11.3 & 22022 &  \multicolumn{2}{c}{-}  &  & 25120.7$^*$ & 25159.5 & 600.0(10) \\\cmidrule(l){2-13}
 & \multirow{3}{*}{0.95} & SC & 31999.8 & 11.6 & 10.2 & 22122 &  \multicolumn{2}{c}{-}  & \multirow{3}{*}{3996001} & 32005.1$^*$ & 32010.0 & 600.0(10) \\
 &  & IC & 27020.2 & 4.2 & 8.0 & 19520 &  \multicolumn{2}{c}{-} & & 27019.5$^*$ & 27024.2 & 472.6(7)   \\
 &  & SS & 25063.4 & 3.3 & 11.3 & 21822 &  \multicolumn{2}{c}{-} & & 25066.4$^*$ & 25116.3 & 600.0(10)  \\\bottomrule
\end{tabular}
\caption{Computational results for the chance-constrained knapsack problem (\ref{ckp})}
\label{table3}
\end{table*}

Tables \ref{table1} and \ref{table2} show the changes in the objective values and numbers of solved knapsack problems for different $K$ values when $n=100$. Naturally, the heuristic algorithm finds a better solution as the number of iterations $K$ increases. As can be seen in Table \ref{table1}, the objective value of the obtained heuristic solution increases as $K$ increases. Additionally, the number of solved ordinary knapsack problems increases as the value of $K$ increases. When $K$ is equal to or greater than 3, there is almost no difference in the objective values. Therefore, we concluded that $K=3$ is sufficient to obtain qualified feasible solutions. In obtaining the following computational results, we also used $K=3$. 

We compared the proposed heuristic algorithm with the approximation approach of Han et al. \cite{han2016robust} and CPLEX 12.7. The approximation approach of Han et al. \cite{han2016robust} obtains the upper bound on the optimal objective value of (\ref{ckp}). When we tested their approximation method, we set the number of segments to $m = 4n$, as recommended in their paper. Note that their approach does not guarantee the feasibility of the solution, since it finds an upper bound on the optimal objective value. Also, their method solves the ordinary knapsack problem $mn-m+1$ times to obtain the approximate objective value. CPLEX, meanwhile, solved the reformulated CKP (\ref{ckp}) using the default settings. We reported the average results of 10 instances for each combination of instance types (SC, IC, SS), $n=100, 500, 1000$ and $\rho = 0.85, 0.90, 0.95$. The time limit was 600 seconds. The results are summarized in Table \ref{table3}. The average objective value (obj), the average computation time in seconds (time), the average value of $\Gamma$ obtained by the jump search algorithm of our heuristic method ($\Gamma$), the average number of solved ordinary knapsack problems (\# knapsack), and the best upper bound obtained by CPLEX within the time limit (ub) are given. As for the CPLEX results, when some instances could not obtain the optimal solution within the time limit, the average objective value of the best feasible solution was given. For this case, the objective value was marked with an asterisk, and the time limit of 600 seconds was applied to the calculation of the average computation time. Also, the number of instances not solved to optimality were given in parentheses.

As can be seen, the proposed heuristic algorithm solved all instances within the time limit. However, the approximation approach of Han et al. \cite{han2016robust} and CPLEX could not solve large-size instances $n=500$ and $1000$ within the time limit. Naturally, the computation time and the number of solved knapsack problems increase as the problem size increases. The average computation time varies according to the instance type, and the difference is greater in CPLEX than in other methods. Whereas we can see that the value of $\Gamma$ also increases, it does not increase in proportion to the problem size. The obtained objective values using the proposed method are comparable to the objective values obtained using CPLEX. Furthermore, for some large-size instances, the objective values obtained using our heuristic method are better than those obtained using CPLEX within the time limit. This means that our proposed method can be effective in practice, since it obtains good feasible solutions within a short time. The approximation approach of Han et al. \cite{han2016robust} needs to solve more knapsack problems than our heurstic method. When $n=1000$, the approach of Han et al. \cite{han2016robust} has to solve the knapsack problem almost 200 times more than our heuristic method. Also, our method can obtain feasible solutions, while the method of Han et al. \cite{han2016robust} cannot. 
%===========================================================================================================================%

\section{Conclusion}
In this paper, we propose a heuristic algorithm for the CKP wherein the weight of each item has an independent normal distribution. The problem can be reformulated as an integer SOCP; however, it is difficult to obtain an optimal solution in a short time using commercial softwares. Therefore we propose a heuristic algorithm using robust optimization and submodularity. The proposed algorithm solves ordinary knapsack problems iteratively. The computational results show the effectiveness and efficiency of our method. The basic idea of the proposed approach is to define a closed interval for each uncertain weight using submodularity and to approximate the CKP to a robust knapsack problem which can be solved relatively easier. We expect that it can be applied to other approaches to the CKP. Also, we leave a heuristic algorithm for the general case with correlated weight values as the future research. In this case, properties of submodularity can no longer be used (see Atamt{\"u}rk and Bhardwaj \cite{atamturk2018network}).

%% The Appendices part is started with the command \appendix;
%% appendix sections are then done as normal sections
%% \appendix

%% \section{}
%% \label{}

%% If you have bibdatabase file and want bibtex to generate the
%% bibitems, please use
%%
\section*{References}
\bibliographystyle{elsarticle-num} 
\bibliography{ckp_ref}

%% else use the following coding to input the bibitems directly in the
%% TeX file.

%%\begin{thebibliography}{00}

%% \bibitem{label}
%% Text of bibliographic item

%%\bibitem{}

%%\end{thebibliography}
\end{document}